\newtheorem{theorem}{Theorem}
\newtheorem{corollary}[theorem]{Corollary}
\def\Re{\operatorname{Re}}
\title
{Exhausting domains of the symmetrized bidisc}
\author{Peter Pflug, W\l odzimierz Zwonek}
\address{Carl von Ossietzky Universit\"at Oldenburg\\
Institut f\"ur Mathe\-ma\-tik\\ Postfach 2503\\ D-26111 Oldenburg,
Germany}\email{pflug@mathematik.uni-oldenburg.de}
\address{Instytut Matematyki, Uniwersytet Jagiello\'nski, \L ojasiewicza 6,
30-348 Krak\'ow, Poland}\email{Wlodzimierz.Zwonek@im.uj.edu.pl}
\begin{document}

\subjclass[2000]{32F17, 32F45}

\keywords{strongly linearly convex domain, $\mathbb C$-convex domain, Lempert theorem}

\begin{thanks}{This paper was written during the stay of the second
author at the Carl von Ossietzky Universit\"at Oldenburg,
(August 2010) supported by the DFG grant
No. 436POL113/103/0-2. The second author was also supported by the research
grant No. N N201 361436 of the Polish Ministry of Science and
Higher Education.}
\end{thanks}

\begin{abstract} We show that the symmetrized bidisc may be exhausted by strongly linearly convex domains. It shows in particular the existence of a strongly linearly convex domain that cannot be exhausted by domains biholomorphic to convex ones.
\end{abstract}

\maketitle
In our paper we show that the symmetrized bidisc can be exhausted by strongly linearly convex domains. Since the symmetrized bidisc is a $\Bbb C$-convex domain that cannot be exhausted by domains biholomorphic to convex ones, this fact has many interesting consequences. It gives a solution to open problems and implies alternate proofs of known results for the symmetrized bidisc.

Recall that a domain $D\subset\Bbb C^n$ is \textit{$\Bbb C$-convex} if for any complex line $\ell$ intersecting $D$ the intersection $\ell\cap D$ is connected and simply connected. A bounded domain $D\subset\Bbb C^n$ with $C^2$-boundary is called \textit{strongly linearly convex} if the defining function $r$ of $D$ satisfies the inequality
\begin{equation}
  \sum\sb{j,k=1}\sp{n}\frac{\partial^2r}{\partial z_j\bar\partial z_k}(z_0)X_j\bar X_k>\left|\sum\sb{j,k=1}\sp{n}\frac{\partial^2 r}{\partial z_j\partial z_k}(z_0)X_jX_k\right|
\end{equation}
for any boundary point $z_0$ and any non-zero vector $X$ from the complex tangent space to $\partial D$ at $z_0$.

Basic facts on $\Bbb C$-convex domains and strongly linearly convex ones that we use in the paper can be found in \cite{APS} and \cite{Hor}. Let us recall only that strong linear convexity implies $\Bbb C$-convexity.

For $\epsilon\in[0,1)$ let us define
\begin{equation}
D_{\epsilon}:=\{(s,p)\in\Bbb C^2:\sqrt{|s-\bar s p|^2+\epsilon}+|p|^2<1\}.
\end{equation}
Note that $D_0$ is the symmetrized bidisc $\Bbb G_2$ (see \cite{Agl-You} for the above description of the symmetrized bidisc) and $D_{\epsilon}\nearrow\Bbb G_2$ as $\epsilon\to 0^+$. Moreover, $\overline{D_{\epsilon}}\subset\Bbb C\times\Bbb D$, $\epsilon\in(0,1)$.

Note that the mapping
\begin{equation}
\Bbb C\times\Bbb D\owns (s,p)\mapsto (s-\bar sp,p)\in\Bbb C^2
\end{equation}
is an $\Bbb R$-diffeomorphism onto the image.
It shows in particular that $D_{\epsilon}$ is $\Bbb R$-diffeomorphic to the convex domain $G_{\epsilon}=\{(w,z)\in\Bbb C^2:\sqrt{|w|^2+\epsilon}+|z|^2<1\}$. Moreover, it is elementary to see that the strongly convex domains $G_{\epsilon}$ ($\epsilon\in(0,1)$) exhaust the (non-strongly) convex domain $G_0$.

We show that a similar result holds for the domains $D_{\epsilon}$.

\begin{theorem}\label{main} The domain $D_{\epsilon}$ is strongly linearly convex, $\epsilon\in(0,1)$. Consequently, the symmetrized bidisc can be exhausted by an increasing sequence of strongly linearly convex domains.
\end{theorem}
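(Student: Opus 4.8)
The plan is to verify inequality (1) directly at an arbitrary boundary point, after two reductions. First, since $\overline{D_{\epsilon}}\subset\Bbb C\times\Bbb D$, on a neighbourhood of $\overline{D_{\epsilon}}$ the condition $\sqrt{|s-\bar sp|^2+\epsilon}+|p|^2<1$ is equivalent to $\rho(s,p)<0$, where
\begin{equation}\label{rhodef}
\rho(s,p):=|s-\bar sp|^2+\epsilon-(1-|p|^2)^2=|s|^2(1+|p|^2)-s^2\bar p-\bar s^2p+2|p|^2-|p|^4+\epsilon-1 ,
\end{equation}
since $\rho=r\cdot\big(\sqrt{|s-\bar sp|^2+\epsilon}+1-|p|^2\big)$ and the second factor is positive near $\overline{D_{\epsilon}}$. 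A short computation shows $d\rho\neq0$ on $\partial D_{\epsilon}$, so $\rho$ is a polynomial defining function, and as strong linear convexity does not depend on the defining function we may work with $\rho$. Second, the maps $(s,p)\mapsto(e^{i\theta}s,e^{2i\theta}p)$ and $(s,p)\mapsto(\bar s,\bar p)$ preserve $D_{\epsilon}$ (immediate from $|e^{i\theta}s-e^{-i\theta}\bar s\,e^{2i\theta}p|=|s-\bar sp|$ and $|\bar s-s\bar p|=|s-\bar sp|$) and, being holomorphic resp.\ anti-holomorphic, preserve strong linear convexity; composing them, it suffices to treat a boundary point $(s_0,p_0)$ with $p_0\in[0,1)$, and if convenient with $\Re s_0\geq0$, $\operatorname{Im}s_0\geq0$.

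Next I would read off from \eqref{rhodef} the complex gradient and the two Hessians at $(s_0,p_0)$: writing $a:=s_0-\bar s_0p_0$ one gets $\partial\rho/\partial s=\bar a-\bar p_0a$, $\partial\rho/\partial p=-\bar a\,\bar s_0+2(1-|p_0|^2)\bar p_0$, and
\[
\Big(\tfrac{\partial^2\rho}{\partial z_j\bar\partial z_k}\Big)=\begin{pmatrix}1+|p_0|^2 & \bar s_0p_0-2s_0\\ s_0\bar p_0-2\bar s_0 & |s_0|^2+2-4|p_0|^2\end{pmatrix},\qquad
\Big(\tfrac{\partial^2\rho}{\partial z_j\partial z_k}\Big)=\begin{pmatrix}-2\bar p_0 & \bar s_0\bar p_0\\ \bar s_0\bar p_0 & -2\bar p_0^2\end{pmatrix}.
\]
Because $d\rho\neq0$, the complex tangent space at $(s_0,p_0)$ is the complex line spanned by $T:=\big(\partial\rho/\partial p,\,-\partial\rho/\partial s\big)$, and since both sides of (1) are homogeneous of degree two in $X$, the theorem reduces to proving, for every $(s_0,p_0)\in\partial D_{\epsilon}$,
\begin{equation}\label{key}
\sum_{j,k}\frac{\partial^2\rho}{\partial z_j\bar\partial z_k}\,T_j\overline{T_k}\ >\ \Big|\sum_{j,k}\frac{\partial^2\rho}{\partial z_j\partial z_k}\,T_jT_k\Big| .
\end{equation}
I would expand both sides, use the boundary relation $(1-|p_0|^2)^2=|a|^2+\epsilon$ to simplify (in particular to eliminate $|s_0|^2$ or $\Re(s_0^2)$), check that the left side is positive, and then show that the difference of the squares of the two sides is non-negative and in fact strictly positive, the strictness being forced by the additive constant $\epsilon$ in $\rho$. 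As a sanity check, for $s_0=0$ one has $a=0$, $p_0=\sqrt{1-\sqrt\epsilon}$, $T\parallel(1,0)$, and \eqref{key} becomes $1+p_0^2>2p_0$, i.e.\ $(1-p_0)^2>0$, with slack $\approx\epsilon/4\to0$ as $\epsilon\to0^+$.

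The step I expect to be the main obstacle is this final algebraic verification: after squaring away the modulus, the difference of the two sides of \eqref{key} is a polynomial of fairly high degree in $\Re s_0,\operatorname{Im}s_0,p_0,\epsilon$ constrained to the boundary hypersurface, and the real work is to organize it — using the residual discrete symmetries and the boundary constraint to cut down the effective number of parameters — so that its non-negativity, and its strict positivity for $\epsilon>0$, become transparent; the fact that strictness fails in the limit $\epsilon\to0^+$ is exactly the manifestation of $\Bbb G_2=D_0$ being $\Bbb C$-convex but not strongly linearly convex. Granting \eqref{key} for all $\epsilon\in(0,1)$, the second assertion follows at once: for any sequence $\epsilon_n\searrow0$ in $(0,1)$ the domains $D_{\epsilon_n}$ are strongly linearly convex, form an increasing sequence (as $D_{\epsilon}$ shrinks when $\epsilon$ grows), and exhaust $\Bbb G_2$ because $D_{\epsilon}\nearrow\Bbb G_2$ as $\epsilon\to0^+$.
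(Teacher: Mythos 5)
Your setup is sound and is in substance the same as the paper's: you take the same polynomial defining function $|s-\bar sp|^2+\epsilon-(1-|p|^2)^2$, reduce the verification of the strong linear convexity inequality to the single complex tangent direction $T=(\partial\rho/\partial p,\,-\partial\rho/\partial s)$, and use the rotation and conjugation symmetries to normalize the boundary point; your gradient and both Hessian matrices check out, as does the sanity check at $s_0=0$. (The paper organizes the same data slightly differently, Taylor-expanding $\lambda\mapsto r(s_0+\lambda s,p_0+\lambda p)$ along a complex line and normalizing $s_0\ge0$ rather than $p_0\ge0$, but that is cosmetic.)

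The problem is that you stop exactly where the proof begins. The entire content of the theorem is the final inequality at a general boundary point, and you explicitly defer it (``the step I expect to be the main obstacle is this final algebraic verification''), offering only the hope that the difference of squares can be ``organized'' into something transparently non-negative. That is not a proof, and --- as you yourself observe --- the inequality degenerates as $\epsilon\to0^+$, so its validity is genuinely delicate: one must isolate precisely the $\epsilon$-dependent terms that survive the cancellation. What the paper does is carry the substitution through to the end: after imposing the tangency relation, dividing by $|p|^2$, normalizing $s\ge0$ and substituting $s^2=\big((1-|p|^2)^2-\epsilon\big)/|1-p|^2$, all terms of order $\epsilon^0$ cancel and the inequality collapses to
\begin{multline*}
2|p|^2\epsilon\,\big|1-2p+|p|^2\big|^2+2|p|^2\epsilon^2+2\epsilon\big((1-|p|^2)^2-\epsilon\big)\Re(1-2p+|p|^2)\\
>2|p|^2\big|\epsilon^2-\epsilon(1-2p+|p|^2)^2\big|,
\end{multline*}
which then follows from the triangle inequality applied to the right-hand side together with $\Re(1-2p+|p|^2)=|1-p|^2>0$ and $(1-|p|^2)^2>\epsilon$ (the case $s_0=0$, where $(1-|p|^2)^2=\epsilon$, being handled separately exactly as in your sanity check). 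Until you produce this reduction --- or an equivalent closed form --- and verify it, the theorem is not proved.
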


Combining Theorem \ref{main} with the fact that the symmetrized bidisc cannot be exhausted by domains biholomorphic to convex ones (see \cite{Edi}) we get the following corollary which gives a negative answer to a long-standing open problem on the existence of a strongly linearly convex domain not biholomorphic to a convex domain. Note that examples of strongly linearly convex domains which are not convex are well known (see \cite{STE} and also \cite{APS}).

\begin{corollary}\label{cor} The domains $D_{\epsilon}$ for $\epsilon>0$ small enough are examples of strongly linearly convex domains that are not biholomorphic to convex ones (and even cannot be exhausted by such domains).
\end{corollary}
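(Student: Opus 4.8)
I would deduce the corollary from Theorem~\ref{main} and Edigarian's theorem that the symmetrized bidisc cannot be exhausted by domains biholomorphic to convex ones (\cite{Edi}), by a routine diagonal argument; the crucial point is that the $D_\epsilon$ exhaust $\Bbb G_2$ \emph{compactly}. First I would note that $\overline{D_{\epsilon'}}$ is a compact subset of $D_\epsilon$ whenever $0<\epsilon<\epsilon'<1$: indeed $D_{\epsilon'}$ is bounded (an easy estimate from its definition, or via its $\Bbb R$-diffeomorphism onto the bounded domain $G_{\epsilon'}$), so $\overline{D_{\epsilon'}}$ is compact, and if $(s,p)\in\overline{D_{\epsilon'}}$ then $\sqrt{|s-\bar sp|^{2}+\epsilon}+|p|^{2}<\sqrt{|s-\bar sp|^{2}+\epsilon'}+|p|^{2}\le 1$, so $(s,p)\in D_\epsilon$. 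Hence for any sequence $\epsilon_n\searrow 0$ we have $\overline{D_{\epsilon_n}}\subset D_{\epsilon_{n+1}}$ for all $n$, while $\bigcup_n D_{\epsilon_n}=\Bbb G_2$ because $D_\epsilon\nearrow\Bbb G_2$ as $\epsilon\to 0^+$; in particular $\bigcup_n\overline{D_{\epsilon_n}}=\Bbb G_2$.

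Next I would argue by contradiction. If the corollary failed, then for every $\delta>0$ there would be an $\epsilon\in(0,\delta)$ for which $D_\epsilon$ can be exhausted by an increasing sequence of domains biholomorphic to convex domains. Choose $\epsilon_n\searrow 0$ for which this holds, and for each $n$ fix such an exhaustion $U^n_1\subset U^n_2\subset\cdots$ with $\bigcup_k U^n_k=D_{\epsilon_n}$ and every $U^n_k$ biholomorphic to a convex domain. Since the compact set $\overline{D_{\epsilon_n}}$ is contained in $D_{\epsilon_{n+1}}=\bigcup_k U^{n+1}_k$, there is an index $k_n$ with $\overline{D_{\epsilon_n}}\subset U^{n+1}_{k_n}=:V_n$. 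Then each $V_n$ is biholomorphic to a convex domain, $V_n\subset\Bbb G_2$, and $\overline{V_n}\subset\overline{D_{\epsilon_{n+1}}}\subset U^{n+2}_{k_{n+1}}=V_{n+1}$, while $\bigcup_n V_n\supset\bigcup_n\overline{D_{\epsilon_n}}=\Bbb G_2$. Thus $(V_n)_n$ is an increasing exhaustion of $\Bbb G_2$ by domains biholomorphic to convex domains, contradicting \cite{Edi}.

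Hence there is a $\delta>0$ such that for every $\epsilon\in(0,\delta)$ the domain $D_\epsilon$ cannot be exhausted by domains biholomorphic to convex ones; applying this to the constant exhausting sequence $D_\epsilon,D_\epsilon,\dots$ shows in particular that such a $D_\epsilon$ is not biholomorphic to a convex domain, while by Theorem~\ref{main} it is strongly linearly convex. This proves the corollary. The argument is entirely formal once Theorem~\ref{main} is available; the only step that is not purely mechanical is the diagonal extraction of the increasing sequence $(V_n)$, and it works precisely because the first paragraph realizes the $D_{\epsilon_n}$ as a \emph{compact} exhaustion of $\Bbb G_2$, so that each compact layer $\overline{D_{\epsilon_n}}$ is absorbed by a single member of the family $(U^{n+1}_k)_k$.
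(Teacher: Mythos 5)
Your argument is correct and is exactly the intended deduction: the paper gives no written proof of the corollary, merely stating that it follows by combining Theorem~\ref{main} with Edigarian's result \cite{Edi}, and your compact-exhaustion plus diagonal extraction of the sequence $(V_n)$ is the standard way to supply the omitted details (needed in particular for the parenthetical stronger claim about exhaustions). Nothing is missing.
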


\noindent{\bf Remark.} Recall that the equality between the Lempert function and the Carath\'eodory distance (i.e.~ the Lempert Theorem) holds for strong\-ly linearly convex domains (see \cite{Lem2}). Therefore, Theorem \ref{main} implies that the equality between the two functions on the symmetrized bidisc follows directly from the Lempert Theorem. It gives an alternate proof of that fact (to that in \cite{Agl-You} and \cite{Cos}). Moreover, it also implies that the tetrablock is the only known non-trivial  example of a domain (i.e.~ bounded and pseudoconvex) for which the fact that Lempert Theorem holds does not follow directly from the papers \cite{Lem} and \cite{Lem2} (see \cite{EKZ}).

\noindent{\bf Remark.} Theorem \ref{main} shows that the two papers of Lempert (see \cite{Lem} and \cite{Lem2}) verify the equality of the Lempert function and the Carath\'eodory distance for different classes of domains (convex ones and strongly linearly convex). This fact seemed to be unknown.

\noindent{\bf Remark.} Theorem \ref{main} also implies that $\Bbb G_2$ is a $\Bbb C$-convex domain - it gives an alternate proof to that in \cite{NPZ}.

Below we choose one of possible (global) defining $C^{\infty}$ functions for the domain $D_{\epsilon}$ ($\epsilon\in(0,1)$):
\begin{equation}
r_{\epsilon}(s,p):=r(s,p):=|s-\bar sp|^2+\epsilon-(1-|p|^2)^2,\;(s,p)\in\Bbb C\times\Bbb D.
\end{equation}

Note that the defining function is even real analytic.

\begin{proof}[Proof of Theorem \ref{main}] Let us fix $\epsilon\in(0,1)$.

First we note that the gradient of $r$ does not vanish on $\partial D_{\epsilon}$ (we shall calculate the complex tangent below).

Now for a point $(s_0,p_0)\in\partial D_{\epsilon}$ and $(s,p)$ being a non-zero tangent (in the complex sense) vector to $\partial D_{\epsilon}$, we shall show that $\rho_{\lambda\bar\lambda}(0)>|\rho_{\lambda\lambda}(0)|$,
where $\rho(\lambda):=r(s_0+\lambda s,p_0+\lambda p)$, $\lambda\in\Bbb C$. Note that for $\rho(s_0,p_0)=0$ and arbitrary $(s,p)$ we have

\begin{multline}
\rho(\lambda)=
2\Re\Big(\big((\bar s_0-s_0\bar p_0)(s-\bar s_0p)-(s_0-\bar s_0p_0)s\bar p_0+2\bar p_0p-2|p_0|^2\bar p_0p\big)\lambda\Big)+\\
|\lambda|^2\Big(|s-\bar s_0p|^2+|s|^2|p_0|^2-2\Re((\bar s_0-s_0\bar p_0)\bar s p)+2|p|^2-2|p_0|^2|p|^2\Big)\\
-\Re\big(2(s-\bar s_0 p)s\bar p_0\lambda^2\big)-\big(\Re(2\bar p_0 p\lambda)\big)^2+o(\lambda^2).
\end{multline}

The above formula shows in particular that tangent vectors $(s,p)$ to $\partial D_{\epsilon}$ are given by the formula
\begin{equation}
s(\bar s_0-s_0\bar p_0-\bar p_0(s_0-\bar s_0p_0))=p(\bar s_0(\bar s_0-s_0\bar p_0)-2\bar p_0+2|p_0|^2\bar p_0).
\end{equation}

It is also elementary to see that for a $C^2$-function $v(\lambda)=\Re(A\lambda)+a|\lambda|^2+\Re(b\lambda^2)-(\Re(c\lambda))^2+o(\lambda^2)$, where $a\in\Bbb R$, $A, b,c\in\Bbb C$, the condition for $v_{\lambda\bar\lambda}(0)>|v_{\lambda\lambda}(0)|$ is
\begin{equation}
a-\frac{|c|^2}{2}>\left|b-\frac{c^2}{2}\right|.
\end{equation}

Applying this information to the function $\rho$ we get the following inequality
\begin{multline}
|s-\bar s_0 p|^2+|s|^2|p_0|^2-2\Re((\bar s_0-s_0\bar p_0)\bar s p)+2|p|^2-2|p_0|^2|p|^2-\frac{|2\bar p_0p|^2}{2}>\\
\left|2(s-\bar s_0 p)s\bar p_0+\frac{(2\bar p_0p)^2}{2}\right|
\end{multline}
that when proven for boundary points $(s_0,p_0)$ and non-zero tangent $(s,p)$ will finish the proof of the theorem.

Substitute the condition on the tangency of the vector $(s,p)$. Since the inequality is trivial when $s_0=0$ we may neglect this case.

Then we divide both sides by $|p|^2$ and after reductions we get the inequality
\begin{multline}
\Big|2|p_0|^2\bar p_0-2\bar p_0+\bar s_0\bar p_0(s_0-\bar s_0p_0)\Big|^2+\\
|p_0|^2|\bar s_0(\bar s_0-s_0\bar p_0)-2\bar p_0+2|p_0|^2\bar p_0|^2-\\
2\Re\Big(((\bar s_0-s_0\bar p_0)(s_0(s_0-\bar s_0 p_0)-2p_0+2|p_0|^2p_0)(\bar s_0-s_0\bar p_0-\bar p_0(s_0-\bar s_0p_0))\Big)+\\
2|\bar s_0-s_0\bar p_0-\bar p_0(s_0-\bar s_0 p_0)|^2-4|p_0|^2|\bar s_0-s_0\bar p_0-\bar p_0(s_0-\bar s_0 p_0)|^2>\\
\Big|2(2|p_0|^2\bar p_0-2\bar p_0+\bar s_0\bar p_0(s_0-\bar s_0 p_0))(\bar s_0(\bar s_0-s_0\bar p_0)-2\bar p_0+2|p_0|^2\bar p_0)\bar p_0+\\
2\bar p_0^2(\bar s_0-s_0\bar p_0-\bar p_0(s_0-\bar s_0p_0))^2\Big|.
\end{multline}

Let us get rid of subscripts. After elementary calculations we get the inequality
\begin{multline}
|p|^2\Big|2|p|^2-2+\bar s(s-\bar s p)\Big|^2+|p|^2\Big|\bar s(s-s\bar p)+2|p|^2\bar p-2\bar p\Big|^2-\\
2\Re\Big((\bar s-s\bar p)(s(s-\bar s p)-2p+2|p|^2p)(\bar s-s\bar p-\bar p(s-\bar s p))\Big)+\\2|\bar s-s\bar p-\bar p(s-\bar s p)|^2-4|p|^2|\bar s-s\bar p-\bar p(s-\bar sp)|^2>\\
2|p|^2\Big|(2|p|^2-2+\bar s(s-\bar s p))(\bar s(\bar s-s\bar p)-2\bar p+2|p|^2\bar p)+\\
(\bar s-s\bar p-\bar p(s-\bar s p))^2\Big|.
\end{multline}

Note that the above function is invariant with respect to the mapping $(s,p)\mapsto(e^{it}s,e^{i2t}p)$ which means that we may assume that $s\geq 0$. Since $\rho(s,p)=0$ we get that $s^2=\frac{(1-|p|^2)^2-\epsilon}{|1-p|^2}$ (and $p$ may be arbitrary complex number satisfying the inequality $\epsilon\leq (1-|p|^2)^2$ ). Substituting the above in the inequality we get that

\begin{multline}
|p|^2\Big|2(|p|^2-1)(1-\bar p)+(1-|p|^2)^2-\epsilon\Big|^2+\\
|p|^2\Big|(1-|p|^2)^2-\epsilon-2\bar p(1-|p|^2)(1-p)\Big|^2-\\
2((1-|p|^2)^2-\epsilon)\cdot\\
\Re\left((1-\bar p)(\frac{(1-|p|^2)^2-\epsilon}{1-\bar p}-2p(1-|p|^2))(1-2\bar p+|p|^2)\right)+\\
2((1-|p|^2)^2-\epsilon)|1-2\bar p+|p|^2|^2-4|p|^2((1-|p|^2)^2-\epsilon)|1-2\bar p+|p|^2|^2>\\
2|p|^2\Big|(2(|p|^2-1)(1-\bar p)+(1-|p|^2)^2-\epsilon)((1-|p|^2)^2-\epsilon-2\bar p(1-|p|^2)(1-p))+\\
((1-|p|^2)^2-\epsilon)(1-2\bar p+|p|^2)^2\Big|,
\end{multline}
which is equivalent to the inequality
\begin{multline}
|1-2p+|p|^2|^22|p|^2\epsilon+2|p|^2\epsilon^2+\\
2\epsilon((1-|p|^2)^2-\epsilon)\Re(1-2p+|p|^2)>2|p|^2|\epsilon^2-\epsilon(1-2p+|p|^2)^2|.
\end{multline}
Note that $\Re(1-2p+|p|^2)=|1-p|^2>0$ which easily implies that the above inequality holds for all possible $p$ (i.e. satisfying the inequality $(1-|p|^2)^2\geq\epsilon$).

\end{proof}

\noindent{\bf Remark} Let us recall some of the open questions concerning the strongly linearly convex and $\Bbb C$-convex domains that still remain open and that can be found in \cite{APS} and \cite{ZZ}:

(a) Does the Lempert theorem hold for any bounded $\Bbb C$-convex domain?

(b) Can any bounded $\Bbb C$-convex domain be exhausted by strongly linearly convex ones ?  The answer is positive under an additional assumption of smoothness of $D$; see \cite{Jac}.

\end{document}